\renewcommand{\div}{\mbox{div}}
\newcommand{\bel}[1]{\begin{equation}\label{#1}}
\newcommand{\beal}[1]{\begin{eqnarray}\label{#1}}
\newcommand{\beadl}[1]{\begin{deqarr}\label{#1}}
\newcommand{\eeadl}[1]{\arrlabel{#1}\end{deqarr}}
\newcommand{\eeal}[1]{\label{#1}\end{eqnarray}}
\newcommand{\eead}[1]{\end{deqarr}}
\newcommand{\eea}{\end{eqnarray}}
\newcommand{\eeaa}{\end{eqnarray*}}
\newcommand{\be}{\begin{equation}}
\newcommand{\ee}{\end{equation}}
\newcounter{mnotecount}[section]
\newcommand{\N}{{\mathbb N}}
\newcommand{\rmnote}[1]{}
\newcommand{\Ric}{\operatorname{Ric}}
\def\mysavedown#1{\edef\mysubs{\mysubs#1}}
\def\mysaveup#1{\edef\mysups{\mysups#1}}
\def\mydown#1{{\mytensor}_{\vphantom{\mysubs}#1}}
\def\myup#1{{\mytensor}^{\vphantom{\mysups}#1}}
\def\tensor#1#2{
  #1
  \def\mytensor{\vphantom{#1}}
  \def\mysubs{\relax}
  \def\mysups{\relax}
  \let\down=\mysavedown
  \let\up=\mysaveup
  #2
  \let\down=\mydown
  \let\up=\myup
  #2
  }
\newcommand{\Riem}{\operatorname{Riem}}
\newcommand{\Tr}{\operatorname{Tr}}
\newcommand{\R}{\mathbb R}
\renewcommand{\div}{\operatorname{div}}
\DeclareMathOperator{\Hess}{Hess}
\renewcommand{\phi}{\varphi}
\renewcommand{\epsilon}{\varepsilon}
\def\crn#1#2{{\vcenter{\vbox{
        \hbox{\kern#2pt \vrule width.#2pt height#1pt
           }
          \hrule height.#2pt}}}}
\newcommand{\Ein}{\operatorname{Ein}}
\renewcommand{\hbar}{{\overline h}}
\newcommand{\pre}[2]{{{\vphantom{#2}}^{#1}}\kern-.2ex{#2}}
\theoremstyle{plain}
\newtheorem{theorem}{Théorème}[section]
\newtheorem{lemma}[theorem]{Lemme}
\newtheorem{proposition}[theorem]{Proposition}
\theoremstyle{definition}
\numberwithin{equation}{section}
\date{22 janvier 2017}
\begin{document}
\title[{I}nversion d'op\'erateurs de courbure II]
{{I}nversion d'op\'erateurs de courbures au voisinage d'une métrique {R}icci parall\`ele~II:  vari\'et\'es non compactes à g\'eom\'etrie born\'ee.}

\author[E. Delay]{Erwann Delay}
\address{Erwann Delay,
Avignon Universit\'e,
Labo. de Math. d'Avignon,
 F-84916 Avignon, France}
\email{Erwann.Delay@univ-avignon.fr}
\urladdr{http://www.math.univ-avignon.fr}

\begin{abstract}
On considère une
 variété riemannienne $(M,g)$ non compacte, complète,  à géométrie bornée et courbure de Ricci parallèle. Nous montrons que certains opérateurs "affines" en la courbure de Ricci sont localement
inversibles, dans des espaces de Sobolev classiques, au voisinage de $g$.
\end{abstract}


\maketitle

\noindent {\bf Mots clefs }: Variété non compacte, Courbure de Ricci, 
2-tenseurs symétriques, système elliptique quasi-linéaire, Problème inverse, espaces de Sobolev.
\\
\newline
{\bf 2010 MSC} : 53C21, 53A45,  58J05, 58J37, 35J62.
\\
\newline
\tableofcontents

\section{Introduction}\label{section:intro}
Sur  une variété Riemannienne $(M,g)$, considérons $\Ric(g)$ sa courbure de Ricci  et $R(g)$ sa courbure scalaire.
Parmi les (champs de) 2-tenseurs symétriques géométriques naturels que l'on peut construire,
les plus simples sont ceux qui seront "affines" en la courbure de Ricci, autrement dit, de la forme
$$
\Ein(g):=\Ric(g)+\kappa R(g)g+\Lambda g,
$$
o\`u $\kappa$ et $\Lambda$ sont des constantes.
Ainsi, si $\kappa=\Lambda=0$ on retrouve la courbure de Ricci, si $\kappa=-\frac12$  le tenseur d'Einstein (avec constante cosmologique $\Lambda$), enfin si $\kappa=-\frac1{2(n-1)}$ et $\Lambda=0$ le 
tenseur de Schouten.
Ce tenseur est géométriquement  naturel :  pour tout difféomorphisme $\varphi$ assez régulier,
$$
\varphi^*\Ein(g)=\Ein(\varphi^*g).
$$
Nous nous posons ici le problème de l'inversion de l'opérateur $\Ein$.
On se donne donc $E$ un champ de tenseur symétrique sur $M$,  on cherche $g$ métrique riemannienne  telle
\bel{mainequation}
\Ein(g)=E.
\ee
On doit ainsi résoudre un système quasi-linéaire particulièrement complexe.
La motivation d'une telle question,  ainsi qu'une liste des travaux antérieurs sur 
le sujet, sont détaillés dans 
 \cite{Delay:ricciproduit}  et ses références,  cette question y étant étudiée  sur des 
variétés compactes. 

L'objectif de cette note est de montrer que les résultats alors
obtenus sont transposables à une large classe de variétés  non compactes. 
Les preuves   identiques 
ne seront pas reproduites. Cette exposition veut faire ressortir
uniquement des ingrédients  suffisants pour répondre au problème dans ce nouveaux
contexte. Elle permettra une adaptation aisée à d'autres cadres.
Par exemple, pour des géométries  particulières, o\`u l'on veut mesurer plus précisément le comportement des fonctions ou (champs de) tenseurs  
 via des espaces à poids ({variétés asymptotiquement cylindriques},
{ asymptotiquement coniques},
{ à cusps},
{ à singularités coniques},...), 
il suffira de vérifier l'éventuelle validité des quelques étapes données ici.
Certains cas  de variétés asymptotiquement euclidiennes ou asymptotiquement hyperboliques
ayant  été analysées par le passé \cite{Delay:ricciAE, Delay:etude,Delay:study, DelayHerzlich}.\\

On considère une variété riemannienne  $(M, g)$ sans bord, complète, non compacte, lisse et  {\it Ricci parallèle}.
Nous supposons de  plus qu'elle est à {\it géométrie bornée}:  son rayon d'injectivité est minoré
(par une constante strictement positive) et  toutes les dérivées covariantes de la courbure de Riemann sont bornées.

Notre but  est de prouver un résultat d'existence locale sur $M$ près de la
métrique  $g$. Nous travaillons pour cela dans des espaces de Sobolev
classiques  $H^{k}$ de fonctions (ou champs de tenseurs, voir section
\ref{sec:Hst} pour une définition plus précise).

Un exemple de résultat que nous nous proposons de montrer ici est le suivant :
\begin{theorem}\label{maintheorem}
Soit  $s\in\N$ tels que $s>\frac n2$, $\kappa=0$. Soit $\Lambda$ un réel tel que $-2\Lambda$ n'est pas dans le spectre $L^2$ du 
Laplacien de Lichnerowicz $\Delta_L$, ou bien est simplement  dans son spectre discret.   
On suppose aussi que $-2\Lambda$ n'est pas dans le spectre $L^2$ du Laplacien de Hodge agissant sur les 1-formes.
Alors pour tout $e\in H^{s+2}(M,\mathcal S_2)$ petit,  il existe un unique $h$ proche de zéro dans 
$H^{s+2}(M,\mathcal S_2)$ telle que
$$
\Ein( g+h)+\frac12\Pi(h)=\Ein( g)+e,
$$
o\`u $\Pi(h)$ est la projection orthogonale $L^2$ de $h$ sur le noyau de $\Delta_L+2\Lambda$.
De plus l'application $e\mapsto h$ est lisse au voisinage de zéro entre les espaces de Hilbert correspondants. 
\end{theorem}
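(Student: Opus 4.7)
Le plan consiste à transposer la stratégie classique de jauge de Bianchi-DeTurck, déjà exploitée dans le cas compact (voir \cite{Delay:ricciproduit}), au cadre non compact à géométrie bornée. On commencera par introduire la 1-forme de DeTurck $V(g,g+h)$ (contraction avec $g+h$ de la différence des symboles de Christoffel entre $g+h$ et $g$), et l'on considérera l'opérateur modifié
$$
\tilde F(h) \,:=\, \Ein(g+h) - \delta^*_{g+h} V(g,g+h) + \tfrac12 \Pi(h) - \Ein(g),
$$
où $\delta^*_{g+h}$ désigne l'adjoint formel de la divergence pour $g+h$. L'ajout du terme de jauge rend $\tilde F$ quasi-linéaire elliptique en $h$.

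La deuxième étape est le calcul de la linéarisation en $h=0$. Comme $g$ est Ricci parallèle, ce calcul est identique à celui du cas compact et fournit
$$
D\tilde F(0) h \,=\, \tfrac12(\Delta_L + 2\Lambda) h + \tfrac12 \Pi(h).
$$
Sur une variété complète à géométrie bornée, $\Delta_L$ est essentiellement auto-adjoint sur $L^2(M,\mathcal S_2)$. Sous les hypothèses faites sur $-2\Lambda$ (hors du spectre $L^2$, ou bien seulement dans la partie discrète), l'opérateur $\tfrac12(\Delta_L + 2\Lambda) + \tfrac12 \Pi$ sera bijectif sur $L^2$, et les estimations elliptiques uniformes valables en géométrie bornée en feront un isomorphisme entre les espaces de Sobolev classiques de la section~\ref{sec:Hst}.

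La troisième étape applique le théorème d'inversion locale. L'hypothèse $s > n/2$ garantit que $H^{s+2}$ est une algèbre de multiplication dans ce cadre, de sorte que $\tilde F$ est lisse comme application entre espaces de Sobolev. Le théorème d'inversion locale entre espaces de Hilbert fournira alors une unique petite solution $h = h(e)$, dépendant lissement de $e$, de l'équation $\tilde F(h) = e$.

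La quatrième étape, qui constituera le principal obstacle, consiste à établir que le champ de jauge $V(g,g+h)$ s'annule, ce qui permettra de conclure que $h$ résout bien l'équation d'origine. On appliquera la divergence $\delta_{g+h}$ à l'équation modifiée et l'on utilisera les identités de Bianchi contractées pour obtenir, après simplification, une équation elliptique auto-adjointe pour $V$ du type
$$
(\Delta_H + 2\Lambda) V \,=\, L_h V,
$$
où $\Delta_H$ est le Laplacien de Hodge sur les 1-formes et $L_h$ un opérateur différentiel d'ordre au plus~un dont la norme tend vers zéro avec $\|h\|$. L'hypothèse que $-2\Lambda$ n'est pas dans le spectre $L^2$ de $\Delta_H$ sur les 1-formes donnera alors la coercivité nécessaire pour conclure $V = 0$. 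C'est précisément à cet endroit que le passage du compact au non compact demande le plus de soin, car l'argument exige simultanément une théorie spectrale $L^2$ valide et des estimations elliptiques uniformes sur $M$ entier, l'une comme l'autre n'étant rendues possibles que par la géométrie bornée couplée à l'hypothèse spectrale.
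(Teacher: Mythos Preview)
Your steps 1--3 are sound: the linearisation of the gauged operator is indeed $\tfrac12(\Delta_L+2\Lambda)+\tfrac12\Pi$, and Proposition~\ref{DeltaLiso} together with the algebra property of $H^s$ (Lemma~\ref{lemAlgebre}) give the smoothness and the local inverse. The genuine gap is in step~4, and it comes from the choice of gauge.

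With your DeTurck field $V=V(g,g+h)$, which is a function of the \emph{metric} $g+h$ and not of the target $E$, applying $B_{g+h}$ to the solved equation
\[
\Ein(g+h)-\delta^*_{g+h}V+\tfrac12\Pi(h)=\Ein(g)+e
\]
gives, using $B_{g+h}\Ein(g+h)=0$ and $B_{g+h}\delta^*_{g+h}=\tfrac12\Delta_H^{g+h}$,
\[
\tfrac12\Delta_H^{g+h}V \;=\; -B_{g+h}\bigl(\Ein(g)+e\bigr)+\tfrac12 B_{g+h}\Pi(h).
\]
The right-hand side is \emph{not} of the form $L_hV$: it contains the source $B_{g+h}(e)$, which is nonzero for generic small $e$ and does not see $V$ at all. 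Moreover no $2\Lambda$ appears on the left; that constant has nowhere to come from in this gauge. Hence one cannot conclude $V=0$. The Christoffel--DeTurck gauge kills itself in the Einstein problem only because there the target is $-\Lambda(g+h)$, which satisfies $B_{g+h}$ identically; for a prescribed right-hand side this mechanism fails.

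The paper avoids this by taking the gauge one-form to be built from the \emph{target}: $\omega=\Ein_g^{-1}\mathcal B_{g+h}(E)$ with $E=\Ein(g)+e-\tfrac12\Pi(h)$, and the gauged map is
\[
\mathcal F(h,e)=\Ein(g+h)-E-\mathcal L_g\,\omega.
\]
Now $\mathcal F=0$ reads $\Ein(g+h)-E=\mathcal L_g\omega$; applying $\mathcal B_{g+h}$ and using Bianchi yields
\[
-\Ein_g\omega=\mathcal B_{g+h}\mathcal L_g\omega,
\]
which at $h=0$ is exactly $(\Delta_H+2\Lambda)\omega=0$: the factor $\Ein_g^{-1}=(\Ric_g+\Lambda)^{-1}$ is precisely what produces the $2\Lambda$. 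For small $h$ this is a perturbation of $P_H$, whose assumed triviality of kernel forces $\omega=0$, hence $\Ein(g+h)=E$. This is DeTurck's original trick for prescribed Ricci, not the Ricci-flow version you invoked; replacing your $V$ by $\Ein_g^{-1}\mathcal B_{g+h}(E)$ (and $\delta^*_{g+h}$ by $\mathcal L_g$) repairs the argument.
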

Pour $\Lambda$ assez grand toutes les conditions sont clairement vérifiées et $\Pi=0$, 
l'équation (\ref{mainequation}) est donc résolue au voisinage de $g$.

Ce théorème est un cas particulier du théorème \ref{theoinvEin} o\`u tous les $\kappa\neq -1/n$ et $\kappa\neq-1/2(n-1)$ sont aussi autorisés
à condition que la métrique $g$ soit en plus d'Einstein.
L'analogue du résultat sur la courbure de Ricci contravariante  obtenu dans 
\cite{Delay:ricciproduit} est facilement transposable dans ce nouveau contexte, 
il ne sera   pas décrit ici.

La régularité de notre solution est optimale, il suffit de transporter l'équation par un difféomorphisme
peu régulier pour s'en convaincre.

Le fait que la métrique de départ soit Ricci parallèle équivaut au fait  qu'elle est localement le produit de métriques d'Einstein
(voir par exemple \cite{Wu:Holonomy}).

Cette inversion nous permet ensuite, en section \ref{sec:ssvar}
de prouver que l'image de certains opérateurs de type
Riemann-Christoffel sont des sous-variétés dans des espaces de Fréchet.\\

{\small\sc Remerciements}. {\small
 Je remercie Gilles Carron pour les références \cite{Schubin1991} et \cite{Bar2000}. 
}

\section{Définitions, notations et conventions}\label{sec:def}

Pour une métrique riemannienne $g$, nous noterons  $\nabla$ sa connexion  de Levi-Civita, par $\Ric(g)$   sa courbure de Ricci et par
$\Riem(g)$ sa courbure de  Riemann sectionnelle. 

Soit ${\mathcal T}_p^q$ l'ensemble des tenseurs covariants de rang $p$ et contravariants de rang $q$.
Lorsque $p=2$ et $q=0$, on notera ${\mathcal S}_2$ le sous-ensemble des tenseurs symétriques,
qui se décompose en ${\mathcal G}\oplus {\mathring{\mathcal
S}_2}$ o\`u ${\mathcal G}$ est l'ensemble des  tenseurs  $g$-conformes et 
${\mathring{\mathcal S}_2}$ l'ensemble des tenseurs sans trace (relativement à $g$). On utilisera la convention de sommation  d'Einstein
 (les indices correspondants vont de $1$ à $n$), et nous utiliserons 
 $g_{ij}$ et son inverse $g^{ij}$ pour monter ou descendre les indices.

Le Laplacien (brut) est défini par
$$
\triangle=-tr\nabla^2=\nabla^*\nabla,
$$
o\`u $\nabla^*$ est l'adjoint formel $L^2$ de $\nabla$. 
Pour  $u$ un champ de 2-tenseur covariant symétrique, on définit sa divergence par
 $$ (\mbox{div}\,u)_i=-\nabla^ju_{ji}.$$ Pour une 1-forme
$\omega$ on $M$, on définit sa divergence par :
$$
d^*\omega=-\nabla^i\omega_i,
$$
et la partie symétrique  de ses dérivées covariantes:
$$
({\mathcal
L}\omega)_{ij}=\frac{1}{2}(\nabla_i\omega_j+\nabla_j\omega_i),$$
(notons que ${\mathcal L}^*=\mbox{div}$).

On définit l'opérateur de Bianchi des 2-tenseurs symétriques dans les 1-formes :
$$
B_g(h)=\div_gh+\frac{1}{2}d(\Tr_gh).
$$
Le Laplacian de
Lichnerowicz  agissant sur les (champs de) 2-tenseurs covariant symétriques est
$$
\triangle_L=\triangle+2(\Ric-\Riem),
$$
o\`u
$$(\Ric\; u)_{ij}=\frac{1}{2}[\Ric(g)_{ik}u^k_j+\Ric(g)_{jk}u^k_i],
$$
et
$$
(\Riem \; u)_{ij}=\Riem(g)_{ikjl}u^{kl}.
$$
Le laplacien de Hodge-de Rham agissant sur les 1-formes sera noté
$$
\Delta_H=dd^*+d^*d=\Delta+\Ric.
$$

\section{Outils d'analyse}\label{sec:Hst}
Les espaces que nous utiliserons sont les espace des Sobolev classique $H^k$ de fonctions ou tenseurs
ayant $k$ dérivées covariantes (au sens des distributions) dans $L^2$.
Plus précisément un champ de tenseur $u$ est  dans $H^k(M,{\mathcal T}_p^q)$ si
$u$ est dans $H^{k}_{loc}$ et, la quantité suivante, qui représentera sa norme dans $H^k$ est finie
$$
\|u\|_k=\left(\int_M\sum_{i=1}^k\|\nabla^{(i)}u\|_g^2d\mu_g\right)^{\frac12}.
$$

Sous la condition de géométrie bornée (définie en introduction), ces espaces ont beaucoup de bonnes propriétés comme :\\

L'injection de Sobolev  (voir \cite{Eichhorn2007} théorème 3.4  page 16 par exemple)
$$
s>\frac n2+k\;\Rightarrow \;\;H^{s}\subset C^k_b,
$$
o\`u $C^k_b$ est l'ensemble des fonctions (ou champs de tenseurs) $C^k$ sur $M$ dont les dérivées covariantes d'ordre $\leq k$
sont bornées. Cette injection permet entre autre de s'assurer que les champs de 2-tenseurs 
symétriques de la forme $g+h$, avec $h$ petit dans $H^s$, sont encore définit 
positifs.

Le lemme suivant a aussi son importance (voir \cite{Eichhorn2007} théorème 3.12  page 21 par exemple).
\begin{lemma}\label{lemAlgebre}
Soient $s>\frac n2$,  $u\in H^{s}(M,{\mathcal T}^p_q)$ et $v\in H^{s}(M,{\mathcal T}^k_l)$ alors on a 
$ u\otimes v\in H^{s}(M,{\mathcal T}^{p+k}_{q+l})$.
De plus il existe une constante $C$, indépendante de $u$ et $v$  telle que
$$
\|u\otimes v\|_{s}\leq C \|u\|_{s} \|v\|_{s}.
$$
\end{lemma}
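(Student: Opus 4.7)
Mon plan est de ramener l'énoncé à la propriété d'algèbre de Sobolev classique sur $\R^n$ via un recouvrement uniformément localement fini de $M$ par cartes de coordonnées normales, en exploitant pleinement l'hypothèse de géométrie bornée.

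On fixe d'abord un rayon $r$ strictement inférieur au tiers du rayon d'injectivité (uniformément minoré) et on choisit une famille maximale $(x_\alpha)_\alpha$ de points de $M$ telle que les boules $B(x_\alpha,r/2)$ soient deux à deux disjointes. Par maximalité, les boules $B(x_\alpha,r)$ recouvrent $M$, et la comparaison volumique de Bishop-Gromov entraîne que $(B(x_\alpha,2r))_\alpha$ a une multiplicité finie $N$ ne dépendant que de $n$, $r$ et des bornes géométriques. Sur chaque boule $B(x_\alpha,2r)$, on travaille en coordonnées normales centrées en $x_\alpha$; les bornes sur toutes les dérivées covariantes de $\Riem(g)$ garantissent alors que les coefficients $g_{ij}$, leurs inverses et les symboles de Christoffel sont uniformément bornés dans $C^k$ pour tout $k$, avec des constantes indépendantes de $\alpha$.

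On construit ensuite, toujours grâce à la géométrie bornée, une partition de l'unité $(\chi_\alpha)$ subordonnée à $(B(x_\alpha,2r))_\alpha$ dont toutes les normes $C^k$ sont uniformément bornées. La formule de Leibniz pour $\nabla^{(i)}$ combinée aux bornes $C^k$ uniformes sur les symboles de Christoffel montre que la norme $\|\cdot\|_s$ covariante d'un tenseur à support dans $B(x_\alpha,2r)$ est équivalente à la norme $H^s$ euclidienne classique de ses composantes dans la carte, avec des constantes d'équivalence indépendantes de $\alpha$. On invoque alors dans chaque carte la propriété d'algèbre de Sobolev classique sur $\R^n$ pour $s>n/2$, sous sa forme dite de Moser
\[
\|fg\|_{H^s}\leq C(\|f\|_{L^\infty}\|g\|_{H^s}+\|g\|_{L^\infty}\|f\|_{H^s}),
\]
appliquée à $\chi_\alpha u$ et $v$. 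Sommer en $\alpha$ en utilisant la multiplicité finie $N$ du recouvrement et l'injection de Sobolev $H^s\subset L^\infty$ (valide par l'énoncé précédent pour $s>n/2$) fournit la majoration voulue.

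La principale difficulté consiste à garantir que toutes les constantes impliquées -- coefficients métriques en coordonnées normales, partition de l'unité, multiplicité du recouvrement, équivalences entre normes covariantes et euclidiennes -- soient uniformes en $\alpha$. L'hypothèse de géométrie bornée est précisément conçue pour fournir cette uniformité, ce qui explique son rôle central dans ce lemme; une variante consisterait à développer $\nabla^{(k)}(u\otimes v)$ par Leibniz et à majorer chaque terme par des inégalités de Gagliardo-Nirenberg uniformes sur $M$.
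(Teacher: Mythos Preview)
The paper does not actually prove this lemma: it is simply stated with a reference to Eichhorn's monograph (théorème~3.12, p.~21). Your sketch is a correct outline of the standard argument on manifolds of bounded geometry and is essentially what one finds in that reference: a uniformly locally finite cover by normal-coordinate balls, a partition of unity with uniform $C^k$ bounds, uniform equivalence between covariant and Euclidean Sobolev norms on each chart, and the Moser product estimate on $\R^n$ glued back by finite multiplicity.

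One small technical point: when you apply the Moser estimate in a chart to $\chi_\alpha u$ and $v$, the factor $v$ is not compactly supported in the chart, so you should introduce a second cutoff $\tilde\chi_\alpha$ equal to $1$ on $\supp\chi_\alpha$ and supported in $B(x_\alpha,2r)$, and write $(\chi_\alpha u)\otimes v=(\chi_\alpha u)\otimes(\tilde\chi_\alpha v)$. The summation in $\alpha$ then goes through exactly as you describe, using bounded multiplicity to control $\sum_\alpha\|\tilde\chi_\alpha v\|_{H^s}^2$ by $\|v\|_s^2$.
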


Enfin, nous avons  besoin de propriétés d'isomorphismes pour des opérateurs du type $\nabla^*\nabla+$ termes de courbures,
agissant sur les champs de 2-tenseurs symétriques, sur les 1-formes, ou les fonctions.
Nous renvoyons le lecteur à des références comme \cite{Schubin1991} ou \cite{Bar2000} pour le vocabulaire
et certains outils utilisés ici. 
On considère donc un fibré tensoriel $E$ sur $M$ et 
$$P=\nabla^*\nabla+K,$$
 o\`u $K$ est un endomorphisme borné de $E$.
On suppose  que 
$$
P:H^2(M,E)\rightarrow L^2(M,E)
$$ 
est Fredholm, en particulier le noyau $L^2$ de $P$ est de dimension finie.
Nous noterons alors $\Pi$ la projection
orthogonale $L^2$ sur $\ker P$. Ainsi, si $h_1,...,h_k$ est une base $L^2$-orthonormée
de $\ker P$, 
$$
\Pi(h)=\sum_{i=1}^k\langle h,h_i\rangle_{L^2}h_i.
$$
Nous pouvons énoncer la
\begin{proposition}\label{DeltaLiso}
Soient $k\in\N$ et $c\in\R$ avec $c\neq 0$. Alors
$P+c\,\Pi$ est un isomorphisme de $H^{k+2}(M,E)$ dans $H^{k}(M,E)$.
\end{proposition}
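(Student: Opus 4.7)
Le plan est d'exploiter le caractère formellement auto-adjoint de $P=\nabla^*\nabla+K$ (en supposant implicitement que $K$ est un endomorphisme symétrique de $E$, ce qui est le cas pour $\Delta_L+2\Lambda$ et $\Delta_H+2\Lambda$ sur les fibrés d'intérêt) joint à l'hypothèse Fredholm, afin d'obtenir la décomposition orthogonale $L^2(M,E)=\ker P\oplus \Image(P)$. Fixons une base $L^2$-orthonormée $h_1,\dots,h_N$ de $\ker P$; par régularité elliptique combinée à la géométrie bornée (estimations uniformes locales de type Sobolev/Schauder comme dans \cite{Schubin1991, Bar2000}, recollées via une partition de l'unité localement finie uniforme subordonnée à des boules de rayon comparable au rayon d'injectivité), chaque $h_i$ est lisse et appartient à tous les $H^j(M,E)$. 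En particulier $\Pi$ est un opérateur de rang fini continu de chaque $H^j$ dans lui-même.

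Pour l'injectivité, soit $u\in H^{k+2}$ vérifiant $(P+c\Pi)u=0$. En prenant le produit scalaire $L^2$ avec $h_i$ on obtient
$$
\langle Pu,h_i\rangle_{L^2}+c\,\langle \Pi u,h_i\rangle_{L^2}=0.
$$
Comme $u\in H^2$ et $h_i$ est lisse à dérivées covariantes bornées, une intégration par parties est licite et donne $\langle Pu,h_i\rangle=\langle u,Ph_i\rangle=0$; donc $c\,\langle u,h_i\rangle=0$ pour chaque $i$, d'où $\Pi u=0$. L'équation se réduit alors à $Pu=0$, et joint à $\Pi u=0$ force $u=0$.

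Pour la surjectivité, étant donné $f\in H^k(M,E)$, on décompose $f=\Pi f+f^\perp$ avec $f^\perp\in(\ker P)^\perp=\Image(P)$; l'hypothèse Fredholm fournit $v\in H^2$ avec $Pv=f^\perp$, et quitte à remplacer $v$ par $v-\Pi v$ on peut supposer $v\perp \ker P$. On pose
$$
u:=v+c^{-1}\Pi f.
$$
Alors $\Pi u=c^{-1}\Pi f$ et $Pu=Pv=f^\perp$, si bien que $(P+c\Pi)u=f$. La régularité $u\in H^{k+2}$ est immédiate pour $c^{-1}\Pi f$ (à valeurs dans le sous-espace lisse de dimension finie $\ker P$), et pour $v$ elle résulte de l'amorçage elliptique \og $v\in L^2$, $Pv=f^\perp\in H^k\Rightarrow v\in H^{k+2}$\fg.

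L'obstacle principal (et en fait le seul point non formel) est cette régularité elliptique uniforme pour $\nabla^*\nabla+K$ sur $(M,g)$ à géométrie bornée: les estimations locales de Gårding sur des boules de coordonnées harmoniques doivent se recoller, grâce à l'uniformité du recouvrement garantie par la minoration du rayon d'injectivité et la borne sur les dérivées covariantes de la courbure, pour fournir les estimations globales $H^k$ nécessaires à la fois pour placer les éléments du noyau $L^2$ dans $\bigcap_j H^j$ et pour remonter la solution $v$ de $H^2$ à $H^{k+2}$. C'est précisément le cadre de type Shubin de \cite{Schubin1991, Bar2000}. Une fois cet ingrédient admis, la continuité de l'inverse $(P+c\Pi)^{-1}:H^k(M,E)\to H^{k+2}(M,E)$ s'obtient par le théorème de l'application ouverte (ou directement à partir de la formule explicite ci-dessus).
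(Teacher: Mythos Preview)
Your proof is correct and follows essentially the same approach as the paper's: both rely on the orthogonal splitting $L^2=\ker P\oplus(\ker P)^\perp$ (with $\Image P=(\ker P)^\perp$ by self-adjointness, which the paper uses implicitly while you make it explicit), the fact that $P$ is an isomorphism on the orthogonal complement and $c\Pi$ on the kernel, and elliptic regularity in bounded geometry to bootstrap from $H^2$ to $H^{k+2}$. Your treatment is simply more detailed, spelling out injectivity and surjectivity separately and naming the self-adjointness hypothesis that the paper leaves tacit.
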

\begin{proof}
Notons $\mathcal K$ le noyau de dimension finie de $P$, ces éléments sont lisses par régularité elliptique.
On note $\mathcal K^\perp$, l'orthogonal $L^2$  de $\mathcal K$.
Alors   $P$ étant Fredholm, 
$$P: H^2\cap\mathcal K^\perp\longrightarrow \mathcal K^\perp
$$
est un isomorphisme. 
Ensuite tout élément $h\in H^2$ se décompose en $$h=u^\perp+u\in (H^2\cap\mathcal K^\perp)\oplus\mathcal K.$$
L'application $$h\mapsto P(u^\perp)+cu\in K^\perp\oplus\mathcal K$$ est clairement
un isomorphisme, or c'est $P+c\,\Pi$. La régularité elliptique permet de conclure à l'isomorphisme entre les $H^k$ (voir par exemple \cite{Eichhorn2007} théorème 3.31 page 36).
\end{proof}

\section{Le théorème principal}
Il est maintenant bien connu que l'équation que nous voulons résoudre (\ref{mainequation}) n'est pas elliptique dû à l'invariance
de la courbure par difféomorphisme. Nous allons modifier cette  équation via un terme jauge en s'inspirant
de la m\'ethode de DeTurck. 

Tout d'abord
l'équation (\ref{mainequation}) est équivalente à 
$$
\Ric(g)=E-\frac{\kappa\Tr_g  E+\Lambda}{1+n\kappa}g.
$$ 
Pour toute métrique $g$, $B_{g}(\Ric(g))=0$
par l'identité de Bianchi. Nous définissons donc 
$$
\mathcal B_g(E)=\div_gE+\frac{2\kappa+1}{2(1+\kappa n)}d\Tr_gE=B_g(E)-\frac{(n-2)\kappa}{2(1+\kappa n)}d\Tr_gE,
$$
de sorte que l'identité de Bianchi se traduise ici par
$$
\mathcal B_g(Ein(g))=0.
$$
On définit \cite{Delay:ricciproduit}:
$$
\mathcal F(h,e):=\Ric(g+h)-E+\frac{\kappa\Tr_{g+h}E+\Lambda}{1+\kappa n}{(g+h)}-\mathcal L_{g}\Ein_g^{-1}\mathcal B_{g+h}(E),
$$
où $\Ein_g$ est l'endomorphisme de $T^*M$ associé à $\Ein(g)$, $$E=\Ein(g)+e-\frac12\Pi(h),$$ et $\Pi$ une projection $L^2$ sur un espace de dimension fini
à préciser ultérieurement.

\begin{proposition}\label{Flisse}
Pour $\kappa\neq -1/n$,  $s>\frac n2$ l'application $$
\mathcal F: H^{s+2}(M,\mathcal S_2)\times H^{s+2}(M,\mathcal S_2)\longrightarrow H^{s}(M,\mathcal S_2),
$$
est bien définie et lisse au voisinage de zéro.
\end{proposition}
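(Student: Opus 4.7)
L'approche consiste à décomposer $\mathcal F(h,e)$ en ses constituants élémentaires et à vérifier que chacun définit une application lisse entre espaces de Sobolev adéquats, en exploitant systématiquement la structure d'algèbre de $H^{s+2}$ (via le Lemme \ref{lemAlgebre} puisque $s+2>n/2$) et l'injection de Sobolev $H^{s+2}\hookrightarrow C^2_b$ qui assure que $g+h$ reste une métrique riemannienne définie positive à géométrie uniformément bornée pour $h$ assez petit.

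Je commencerais par établir que $h\mapsto(g+h)^{-1}$ est lisse d'un voisinage de $0$ dans $H^{s+2}$ vers $H^{s+2}$, via la série de Neumann $(g+h)^{-1}=g^{-1}-g^{-1}hg^{-1}+\ldots$, dont la convergence dans l'algèbre $H^{s+2}$ est assurée par le Lemme \ref{lemAlgebre}. Les expressions des symboles de Christoffel et de la courbure de Ricci de $g+h$ étant polynomiales en $(g+h)^{-1}$, $\nabla h$ et $\nabla^2 h$ (plus des termes de courbure de $g$, bornés par hypothèse de géométrie bornée), on en déduira que $h\mapsto\Ric(g+h)-\Ric(g)$ est une application lisse de $H^{s+2}$ dans $H^s$. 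De manière analogue, $(h,e)\mapsto\mathcal B_{g+h}(E)$ se présente comme un opérateur du premier ordre en $E$ à coefficients dépendant lissement de $(g+h)^{-1}$ et $\nabla(g+h)$, d'où une application lisse à valeurs dans $H^{s+1}$.

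Pour les pièces restantes : le terme en trace $\kappa\Tr_{g+h}E/(1+\kappa n)$ est bien défini grâce à $\kappa\neq-1/n$ et reste dans $H^{s+2}$ par le Lemme \ref{lemAlgebre} ; la projection $\Pi$ sur un noyau $L^2$ de dimension finie (formé de tenseurs lisses par régularité elliptique) définit une application linéaire continue $H^{s+2}\to H^{s+2}$ ; $\Ein_g$ étant un endomorphisme parallèle (puisque $g$ est Ricci parallèle) à spectre constant, il induit, sous l'hypothèse implicite d'inversibilité, un automorphisme borné préservant chaque $H^k$ ; enfin $\mathcal L_g$ est un opérateur différentiel du premier ordre à coefficients bornés, de sorte que $\mathcal L_g\,\Ein_g^{-1}\,\mathcal B_{g+h}(E)\in H^s$ avec dépendance lisse en $(h,e)$. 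L'addition des diverses contributions fournira $\mathcal F(h,e)\in H^s$.

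Le principal obstacle sera de formaliser la \emph{régularité lisse} (et non simplement la continuité) des applications non linéaires en jeu ; cela découlera du caractère analytique des dépendances en $(g+h)^{-1}$ et $h$, combiné au Lemme \ref{lemAlgebre} et aux estimations uniformes sur $M$ fournies par l'hypothèse de géométrie bornée, qui permettent de traiter uniformément tous les termes de courbure de $g$ apparaissant comme coefficients.
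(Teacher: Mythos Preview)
Your proposal is correct and follows essentially the same approach as the paper's appendix: decompose $\mathcal F$ into its constituents, use the Neumann series together with Lemma~\ref{lemAlgebre} to control $(g+h)^{-1}$ in $H^{s+2}$, then track each piece ($\Ric(g+h)-\Ric(g)$, $\mathcal B_{g+h}(E)$, the zero-order term) through the algebra property. You are in fact slightly more careful than the paper in explicitly addressing the projection $\Pi$ and the invertibility of $\Ein_g$; the paper's appendix silently writes $E=\Ein(g)+e$ and relies on the Ricci-parallel hypothesis (so $\nabla\Ein(g)=0$) exactly as you note.
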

\begin{proof}
La preuve de cette proposition est renvoyée en appendice,
elle utilise essentiellement le fait  que sous ces hypothèses,
l'espace $H^{s}$ est "uniformément" stable par produit tensoriel
(voir lemme \ref{lemAlgebre}).
\end{proof}

Comme dans \cite{Delay:ricciproduit}, définissons l'opérateur
$$
\begin{array}{lll}
\mathcal P h&:=&\Delta_Lh+\frac{2(n\kappa\tau+\Lambda)}{1+kn}h
+\frac{\kappa}{n(1+\kappa n)}\Big({(n-2)}\Delta \Tr_gh-2 n\tau\Tr_gh\Big)\;g\\
&=&(\Delta_L+{2\kappa R(g)+2\Lambda})h
+\frac{\kappa}{n(1+\kappa n)}\Big({(n-2)}\Delta \Tr_gh-2 n\tau\Tr_gh\Big)\;g.\\
\end{array}
$$
Il sera lié à la différentielle de $\mathcal F$ comme nous allons voir ci-après.
Notons qu'il respecte le scindage $\mathcal S_2=\mathcal G\oplus\mathring {\mathcal S}_2$.
 En particulier si
$u$ est une fonction sur $M$ et $\mathring h$ un champ de 2-tenseurs symétrique sans trace, on a
$$
\mathcal P(ug+\mathring h)=\frac{1}{1+\kappa n}p(u)g+\mathring P(\mathring h),
$$
o\`u
$$
p(u)=(1+2(n-1)\kappa)\Delta u+2\Lambda u,
$$
et 
$$
\mathring P(\mathring h)=\left[\Delta_L+{2\kappa R(g)+2\Lambda}\right]\mathring h.
$$

Pour $u$ une fonction sur $M$ et $\mathring h$ un champ de 2-tenseurs symétrique sans trace, on définit
$$\Pi(ug+\mathring h):=\pi(u)g+\mathring\Pi(\mathring h),$$
o\`u $\pi$ est la projection $L^2$ sur noyau de $p$, et 
$\mathring\Pi$  la projection $L^2$ sur noyau de $\mathring P$.
Ainsi si  $h=ug$ on trouve \cite{Delay:ricciproduit}:
 $$D_h\mathcal F(0,0)(ug)=
\frac1{2}[p(u) +\pi(u)]g-\frac{(n-2)n\kappa}{2(1+\kappa n)}\mathring\Hess \;u,
$$
o\`u $\mathring\Hess \;u$ est la partie sans trace de la hessienne de $u$.
Si $h=\mathring h$ est sans trace, on obtient  \cite{Delay:ricciproduit} :
 $$
D_h\mathcal F(0,0)(\mathring h)
=\frac12\left(\mathring P+ \mathring\Pi\right)\mathring h.
$$
Définissons enfin  l'opérateur agissant sur les 1-formes :
$$
P_H:=\Delta_H+2\kappa R(g)+2\Lambda.
$$
\begin{theorem}\label{theoinvEin}
Soient  $s>n/2$, $\kappa\neq -\frac1n,-\frac1{2(n-1)}$ et  $\Lambda\in\R$. Soit $g$ une métrique Ricci parallèle si $\kappa=0$ et d'Einstein sinon, telle que $\Ein(g)$
est non dégénéré. On suppose que  $p$, $\mathring P$ et $P_H$ sont  Fredholm de $H^2$ dans $L^2$, que le noyau $L^2$ de $p$ est  trivial ou réduit aux constantes, 
et que le noyau de $P_H$ est trivial.  Alors pour tout $ e\in H^{s+2}(M,\mathcal S_2)$ petit,  il existe un unique $h$ proche de zéro dans 
$H^{s+2}(M,\mathcal S_2)$ telle que
$$
 \Ein(g+h)=\Ein(g)+e-\frac12\Pi(h),
$$
De plus l'application $ e\mapsto h$ est lisse au voisinage de zéro entre les espaces de  Hilbert correspondants. 
\end{theorem}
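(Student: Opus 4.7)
L'approche consiste à appliquer le théorème d'inversion locale à l'équation modifiée $\mathcal{F}(h,e)=0$, qui est elliptique grâce au terme jauge à la DeTurck, puis à vérifier \emph{a posteriori} que ce terme jauge s'annule pour la solution obtenue. Par la proposition \ref{Flisse}, $\mathcal{F}:H^{s+2}\times H^{s+2}\to H^s$ est lisse au voisinage de $(0,0)$, et $\mathcal{F}(0,0)=0$ par l'identité de Bianchi $\mathcal{B}_g(\Ein(g))=0$. Tout repose donc sur deux points : (i) l'inversibilité de la différentielle partielle $D_h\mathcal{F}(0,0)$, et (ii) l'annulation du terme de jauge pour la solution fournie par l'inversion locale.

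Pour (i), j'utiliserais la décomposition $g$-orthogonale $\mathcal{S}_2=\mathcal{G}\oplus\mathring{\mathcal{S}}_2$ avec $h=ug+\mathring h$. Les expressions explicites rappelées avant l'énoncé montrent que l'image dans la composante conforme ne dépend que de $u$, via $\tfrac12(p(u)+\pi(u))g$, tandis que l'image dans la composante sans trace vaut $\tfrac12(\mathring P+\mathring\Pi)\mathring h-\tfrac{(n-2)n\kappa}{2(1+\kappa n)}\mathring{\Hess}\,u$. Cette structure triangulaire supérieure permet une résolution séquentielle : on inverse d'abord l'équation scalaire sur $u$ via la proposition \ref{DeltaLiso} appliquée à $p$ avec $c=1$ (en utilisant la Fredholmité de $p$ et le fait que son noyau soit trivial ou réduit aux constantes, donc que $\pi$ soit bien définie comme projection $L^2$) ; on résout ensuite pour $\mathring h$ via la proposition \ref{DeltaLiso} appliquée à $\mathring P$ avec $c=1$. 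Ceci établit que $D_h\mathcal{F}(0,0):H^{s+2}(M,\mathcal{S}_2)\to H^s(M,\mathcal{S}_2)$ est un isomorphisme, et le théorème d'inversion locale dans les espaces de Hilbert fournit une unique solution $h=h(e)$ petite, dépendant lissement de $e$.

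L'obstacle principal se trouve en (ii) : montrer que pour cette solution, la 1-forme $\omega:=\mathcal{B}_{g+h}(E)$ est nulle. En appliquant $\mathcal{B}_{g+h}$ à l'équation $\mathcal{F}(h,e)=0$ et en exploitant l'identité de Bianchi $\mathcal{B}_{\tilde g}(\Ein(\tilde g))=0$ pour $\tilde g=g+h$ (après prise de trace pour éliminer la dépendance en $R(\tilde g)$), on aboutit à une équation du type $Q_h\,\omega=0$, où $Q_h$ est un opérateur elliptique du second ordre sur les 1-formes, dépendant continûment de $h$. L'ingrédient crucial est que $\Ein(g)$ est un tenseur parallèle (car $g$ est Ricci parallèle si $\kappa=0$, ou Einstein sinon), donc $\Ein_g^{-1}$ est un endomorphisme parallèle de $T^*M$ qui commute avec $\nabla$ ; la formule de Weitzenböck pour $B_g\mathcal{L}_g$ permet alors, par un calcul direct, d'identifier $Q_0$ avec $P_H=\Delta_H+2\kappa R(g)+2\Lambda$ à un facteur multiplicatif non nul près. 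Sous l'hypothèse que $P_H$ est Fredholm de noyau $L^2$ trivial, il est un isomorphisme $H^2(M,T^*M)\to L^2(M,T^*M)$ (directement par la preuve de la proposition \ref{DeltaLiso}). L'injection de Sobolev $H^{s+2}\hookrightarrow C^2_b$ et l'uniformité des estimations (lemme \ref{lemAlgebre}) assurent enfin que, pour $\|h\|_{s+2}$ assez petit, $Q_h$ reste proche de $Q_0$ en norme opérateur et demeure injectif sur $H^2$. On conclut $\omega=0$, et donc, par $\mathcal{F}(h,e)=0$ et la non-dégénérescence de $\Ein_g$, l'égalité voulue $\Ein(g+h)=\Ein(g)+e-\tfrac12\Pi(h)$.
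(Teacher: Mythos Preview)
Votre démonstration est correcte et suit précisément le schéma attendu : le papier se contente de renvoyer à \cite{Delay:ricciproduit} pour la preuve, et vous en avez reconstitué fidèlement les étapes --- inversion locale sur $\mathcal F$ via la structure triangulaire de $D_h\mathcal F(0,0)$ dans la décomposition $\mathcal G\oplus\mathring{\mathcal S}_2$ (proposition~\ref{DeltaLiso}), puis annulation du terme de jauge par l'argument de perturbation autour de $P_H$. Un seul point mériterait d'être précisé : dans le cas Ricci-parallèle non Einstein ($\kappa=0$), l'endomorphisme $\Ein_g^{-1}$ n'est pas scalaire et ne commute pas à $\mathcal L_g$, de sorte que l'identification de $Q_0$ à $P_H$ ``à un facteur près'' est un peu rapide ; c'est bien l'injectivité de $P_H$ qui entraîne celle de $Q_0$, mais le calcul exploite de façon plus fine la parallélité de $\Ein_g^{-1}$ et la forme explicite de $B_g\mathcal L_g$.
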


\begin{proof}

Idem à \cite{Delay:ricciproduit}

\end{proof}

\section{Opérateurs  de type Riemann-Christoffel}\label{sec:ssvar}
Nous allons rappeler comment  montrer que l'image de certain opérateurs de 
type Riemann-Christoffel, sont des sous variétés dans $C^\infty$, au voisinage de la  métrique $g$. Définissons un tenseur  $\mathcal Ein$ qui soit  4 fois covariant, ayant les m\^emes propriétés 
algébriques que le tenseur de Riemann, affine en la courbure et dont la trace soit proportionnelle à $\Ein$ \cite{Delay:ricciproduit}:
$$
\mathcal Ein(g)=\Riem(g)+g {~\wedge \!\!\!\!\!\bigcirc ~} (a\Ric(g)+bR(g)g+cg), 
$$
o\`u ${~\wedge \!\!\!\!\!\bigcirc ~}$ est le produit de Kulkarni-Nomizu (\cite{Besse} p. 47),
 $$a\in \R\backslash\left\{\frac{-1}{n-2}\right\},\;\;c=\frac{1+(n-2)a}{2(n-1)}\Lambda,\;\;b=\frac{\kappa[1+a(n-2)]-a}{2(n-1)}.$$
On a alors
$$
\Tr_g\mathcal Ein(g)=[a(n-2)+1]\Ein(g).
$$
La version de type Riemann-Christoffel de $\mathcal Ein(g)$ est définie par
$$
[g^{-1}\mathcal Ein(g)]^i_{klm}:=g^{ij}\mathcal Ein(g)_{jklm}.
$$
Consid\'erons ${\mathcal R}^1_3$, le sous-espace de ${\mathcal T}^1_3$ des
tenseurs v\'erifiants
$$
\tau^i_{ilm}=0,\;\tau^i_{klm}=-\tau^i_{kml},\;
\tau^i_{klm}+\tau^i_{mkl}+\tau^i_{lmk}=0.
$$
On définit l'espace de Fréchet 
$$H^{\infty}=\cap_{k\in\N}H^{k},$$
munit de la famille de semi-normes  $\{\|.\|_{k}\}_{k\in\N}$.
On procède alors de façons similaire à \cite{Delay:etude} pour prouver que 
\begin{theorem}
Sous les conditions du théorème \ref{theoinvEin}, on suppose de plus que le noyau de $\mathcal P$ est trivial,
autrement dit  $ \Pi=0$. Alors l'image de l'application
$$
\begin{array}{lll}
H^{\infty}(M,\mathcal S_2)&\longrightarrow&H^{\infty}(M,\mathcal R_3^1)\\
h&\mapsto &(g+h)^{-1}\mathcal Ein(g+h)-(g)^{-1}\mathcal Ein(g)\\
\end{array}
$$
est une sous-variété lisse au voisinage de zéro.
\end{theorem}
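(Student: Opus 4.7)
Le plan est de s'inspirer de \cite{Delay:etude} et d'exploiter l'inversion locale de $\Ein$ pour construire explicitement des cartes de sous-variété fréchetique. Posons
$$
\Phi(h)=(g+h)^{-1}\mathcal Ein(g+h)-g^{-1}\mathcal Ein(g)
$$
et soit $\Psi:\mathcal R^1_3\to\mathcal S_2$ la contraction purement algébrique $\Psi(T)_{kl}=T^i_{kil}$, indépendante de toute métrique. Puisque $\Tr_{g+h}\mathcal Ein(g+h)=c\,\Ein(g+h)$ avec $c=1+a(n-2)\neq 0$, la définition même de la remontée d'indice fournit l'identité clé
$$
\Psi(\Phi(h))=c\,[\Ein(g+h)-\Ein(g)].
$$

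Sous l'hypothèse $\Pi=0$, le Théorème \ref{theoinvEin} fournit, pour chaque $s>n/2$, un inverse lisse local $\sigma_s:U_s\to H^{s+2}(M,\mathcal S_2)$ tel que $\Ein(g+\sigma_s(e))=\Ein(g)+e$ et $\sigma_s(0)=0$. Par unicité ces applications sont compatibles et définissent une application lisse $\sigma:U\to H^{\infty}(M,\mathcal S_2)$ au voisinage de zéro dans l'espace de Fréchet $H^{\infty}(M,\mathcal S_2)$. Posons ensuite $W:=\ker\Psi\cap H^{\infty}(M,\mathcal R^1_3)$, sous-espace fermé de $H^{\infty}(M,\mathcal R^1_3)$, et définissons sur des voisinages adéquats de zéro
$$
G(h,w)=\Phi(h)+w,\qquad F(T)=\bigl(\sigma(c^{-1}\Psi(T)),\,T-\Phi(\sigma(c^{-1}\Psi(T)))\bigr),
$$
avec $G:H^{\infty}(M,\mathcal S_2)\times W\to H^{\infty}(M,\mathcal R^1_3)$ et $F$ dans l'autre sens. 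L'identité du paragraphe précédent assure d'abord que la seconde composante de $F(T)$ appartient bien à $\ker\Psi$, puis un calcul direct utilisant $\sigma\circ(c^{-1}\Psi)\circ\Phi=\mathrm{id}$ livre $F\circ G=\mathrm{id}$ et $G\circ F=\mathrm{id}$ au voisinage de l'origine. Ainsi $G$ est un difféomorphisme local entre espaces de Fréchet dans lequel l'image de $\Phi$ s'identifie à $H^{\infty}(M,\mathcal S_2)\times\{0\}$, ce qui montre qu'elle est une sous-variété lisse au voisinage de zéro.

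Le point délicat est de vérifier la régularité lisse de tous les ingrédients dans la topologie de Fréchet $H^{\infty}$: la lissité de $\Phi$ résulte, à chaque niveau Sobolev, du même argument algébrique que pour $\mathcal F$ dans la Proposition \ref{Flisse} (via le Lemme \ref{lemAlgebre}); celle de $\sigma$ s'obtient par passage à la limite en $s$ grâce à l'unicité des solutions de $\Ein(g+h)=\Ein(g)+e$; enfin la composition définissant $F$ est bien posée au voisinage de zéro car $\Psi$ est continue, donc $c^{-1}\Psi(T)$ reste dans le domaine de $\sigma$ pour $T$ assez petit.
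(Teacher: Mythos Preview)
Your argument is correct and is precisely the approach of \cite{Delay:etude} that the paper invokes: exploiting the metric-independent contraction $\Psi$ to recover $\Ein(g+h)-\Ein(g)$ from $\Phi(h)$, then using the local inverse $\sigma$ from Théorème~\ref{theoinvEin} (with $\Pi=0$) to build an explicit pair of mutually inverse charts $G,F$ that linearize the image of $\Phi$ onto $H^{\infty}(M,\mathcal S_2)\times\{0\}$. The verification that $\Psi$ lands in $\mathcal S_2$, that the second component of $F$ lies in $\ker\Psi$, and that $F\circ G$ and $G\circ F$ are identities are all straightforward and correctly handled; the passage to $H^{\infty}$ via compatibility of the $\sigma_s$ is the standard step.
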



\section{Appendice}
Nous justifions ici la proposition \ref{Flisse} par une preuve  formelle
 (voir  \cite{Delay:etude} pour une preuve similaire 
 particulièrement détaillée). Nous pourrions  omettre cet appendice,
très similaire à celui de \cite{Delay:ricciAE}, qui utilise essentiellement le lemme \ref{lemAlgebre}. Nous  avons choisi de l'adapter afin d'avoir une trame complète de la résolution du problème dans d'autres contextes.\\

Rappelons que la  différence des courbures de Ricci s'exprime en coordonnées locales par
$${
\Ric(g+h)_{jk}-\Ric(g)_{jk}=\nabla_lT^l_{jk}-\nabla_kT^l_{jl}}
+T^p_{jk}T^l_{pl}-T^p_{jl}T^l_{pk},
$$
o\`u
$$
T^k_{ij}=\frac{1}{2}[(g+h)^{-1}]^{ks}(\nabla_ih_{sj}+\nabla_jh_{is}
-\nabla_sh_{ij}).
$$
Nous écrirons donc abusivement
$$
\Ric(g)=\nabla T+TT\;,\;\;\; T=(g+h)^{-1}\nabla h.
$$
Ici nous avons  $h$ petit dans $ H^{s+2}$, $s>\frac n2$. On a alors
$$(g+h)^{-1}=g^{-1}+\widetilde h\;,\;\;\widetilde h\in H^{s+2},$$
avec par inégalité triangulaire et par le lemme \ref{lemAlgebre}
$$
\|\widetilde h\|_{s+2}\leq \sum_{k\in\N}C^{k}\|h\|_{s+2}^{k+1}=\frac{\|h\|_{s+2}}{1-C\|h\|_{s+2}}.
$$
Pour $\|h\|_{s+2}\leq \frac1{2C}$, ce qu'on suppose désormais, on a 
$$
\|\widetilde h\|_{s+2}\leq 2\|h\|_{s+2}.
$$
On obtient alors, en utilisant encore le lemme \ref{lemAlgebre}, et en omettant dorénavant les constantes 
$$
T=(g^{-1}+\widetilde h)\nabla h\in H^{s+1}\;,\;\;\|T\|_{s+1}\leq \|h\|_{s+2},
$$
et 
$$
\nabla T\in H^{s}\;,\;\;\|\nabla T\|_{s}\leq \|T\|_{s+1}\leq \|h\|_{s+2},
$$
d'o\`u, toujours en utilisant  le lemme \ref{lemAlgebre},
$$
\Ric(g+h)-\Ric(g) \in H^{s}\;,\;\;\|\Ric(g+h)-\Ric(g)\|_{s}\leq \|h\|_{s+2}.
$$
Étudions maintenant l'opérateur de Bianchi 
$$
\mathcal B_{(g+h)}(E)=\div_{(g+h)}E+\frac{2\kappa+1}{2(1+\kappa n)}d\Tr_{(g+h)}E,
$$
que nous écrirons encore abusivement 
$$
\mathcal B_{(g+h)}(E)=(g+h)^{-1}(\nabla  E+T E)+\nabla [(g+h)^{-1}E].
$$
Compte tenu des calculs précédent et du fait que $E=\Ein(g)+e$ (avec $\nabla \Ein(g)=0$ par hypothèse),
on a 
$$
\mathcal B_{g+h}(E)=(g^{-1}+\widetilde h)[\nabla  e+T (\Ein(g)+e)]+\nabla [g^{-1}e+\widetilde h (\Ein(g)+e)].
$$
On estime alors comme précédemment  
$$
\mathcal B_{g+h}(E)\in H^{s+1}\;,\;\;\|\mathcal B_{g+h}(E)\|_{s+1}\leq (\|h\|_{s+2}+\|e\|_{s+2}),
$$
et
$
\mathcal L_g\Ein_g^{-1}\mathcal B_{g+h}(E)\in H^{s},$ $$\|\mathcal L_g\Ein_g^{-1}\mathcal B_{g+h}(E)\|_{s}\leq
\|\mathcal B_{g+h}(E)\|_{s+1}\leq (\|h\|_{s+2}+\|e\|_{s+2}).
$$
Il reste a estimer un terme d'ordre zéro :
$$
Z:=\frac{\kappa\Tr_{g+h}E+\Lambda}{1+\kappa n}{(g+h)}-E+\Ric(g)
$$
On écrit encore formellement, en se souvenant ici que le premier "produit" est une trace,
$$
\begin{array}{lll}
Z&=&\displaystyle{\frac{\kappa(g^{-1}+\widetilde h)(\Ein(g)+e)+\Lambda}{1+n\kappa}(g+h)-(\Ein(g)-\Ric(g)+e)}\\
&=&\displaystyle{\frac{\kappa g^{-1}e+\kappa\widetilde h(\Ein(g)+e)+\Lambda+\kappa\Tr_g\Ein(g))}{1+n\kappa}(g+h)}\\
&&\hspace{7cm}-(\Ein(g)-\Ric_g+e).\\
\end{array}
$$
En développant, on remarque que le terme "constant":
$$
\frac{\Lambda+\kappa\Tr_g\Ein(g)}{1+n\kappa}g-(\Ein(g)-\Ric(g))$$
est nul et que l'on peut estimer
comme auparavant, pour $k\neq -1/n$,
$$
Z\in H^{s}\;,\;\;\|Z\|_{s}\leq \|Z\|_{s+2}\leq(\|h\|_{s+2}+\|e\|_{s+2}).
$$
%

\providecommand{\bysame}{\leavevmode\hbox to3em{\hrulefill}\thinspace}
\providecommand{\MR}{\relax\ifhmode\unskip\space\fi MR }
\providecommand{\MRhref}[2]{%
  \href{http://www.ams.org/mathscinet-getitem?mr=#1}{#2}
}
\providecommand{\href}[2]{#2}

\end{document}